\newtheorem{theorem}{Theorem}
\newtheorem{lemma}[theorem]{Lemma}
\newtheorem{proposition}[theorem]{Proposition}
\newtheorem{corollary}[theorem]{Corollary}
\theoremstyle{definition}
\theoremstyle{remark}
\newcommand{\2}{{\mathcal A}^{**}}
\newcommand{\A}{\mathcal A}
\newcommand{\X}{\mathcal X}
\newcommand{\Y}{\mathcal Y}
\newcommand{\Z}{\mathcal Z}
\newcommand{\bea}{\begin{eqnarray*}}
\newcommand{\eea}{\end{eqnarray*}}
\begin{document}

\title []{Lifting derivations and  $n-$weak amenability of the second dual of a Banach algebra}
\author{S. Barootkoob}
\address{Department of Pure Mathematics, Ferdowsi University of Mashhad, P.O. Box 1159, Mashhad 91775, Iran} \email{sbk-923@yahoo.com}
\author{H.R. Ebrahimi Vishki}
\address{Department of Pure Mathematics and  Centre of Excellence
in Analysis on Algebraic Structures (CEAAS), Ferdowsi University of
Mashhad, P.O. Box 1159, Mashhad 91775, Iran.}
\email{vishki@um.ac.ir}
\subjclass[2000]{46H20, 46H25} \keywords{Weak amenability, $n$-weak
amenability, derivation, second dual, Arens product}

\begin{abstract} We show that for $n\geq 2$,  $n-$weak amenability of the second dual $\A^{**}$ of a Banach algebra $\A$ implies that of $\A$. We also provide a positive answer for the case $n=1,$ which sharpens some older results. Our method of proof also provides a unified approach to give short proofs for  some  known results in the case where $n=1$.
\end{abstract}
\maketitle
 The concept of $n$-weak amenability was initiated and intensively developed by Dales,
Ghahramani and Gronb\ae k \cite{DGG}. A Banach algebra ${\mathcal
A}$ is said to be $n$-weakly amenable ($n\in\mathbb{N}$) if every
(bounded) derivation from ${\mathcal A}$ into ${\mathcal A}^{(n)}$ (the $n^{\rm th}$ dual of $\A$) is
inner. Trivially, $1$-weak amenability is nothing else than weak
amenability, which  was first introduced and intensively studied by Bade, Curtis
and Dales \cite{BCD} for commutative Banach algebras and then by
Johnson  \cite{J} for a general Banach algebra.

We equip  the second dual $\2$
of $\A$ with its first Arens product and focus on the following question which is of special interest, especially for the case when $n=1$.
\begin{center}
{\it Does  $n-$weakly amenability of $\2$   force $\A$ to be $n-$weakly amenable?}
\end{center}
In the present paper first we shall show:
\begin{theorem}\label{1} The answer to the above question is positive for any $n\geq 2$.
\end{theorem}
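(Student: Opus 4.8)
The plan is to \emph{lift} an arbitrary bounded derivation on $\A$ to one on $\2$ and then to invoke the hypothesis. Fix $n\geq 2$ and a bounded derivation $D\colon\A\to\A^{(n)}$; we must show that $D$ is inner. The starting observation is the canonical identification
\[
\A^{(n)}=\bigl(\A^{(n-1)}\bigr)^{*}=(\2)^{(n-2)},
\]
under which $\A^{(n)}$ carries simultaneously its usual Banach $\A$-bimodule structure and the Banach $\2$-bimodule structure it possesses as an $(n-2)$-th dual of the Banach algebra $\2$ (with its first Arens product). The first thing I would record is that these two structures are \emph{compatible}: the $\2$-actions on $\A^{(n)}$, restricted along the canonical embedding $\A\hookrightarrow\2$, coincide with the original $\A$-actions. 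This is a routine consequence of the functorial behaviour of the Arens products.

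Next comes the lifting. Because $n\geq 2$, the module $\A^{(n)}=\X^{*}$ with $\X=\A^{(n-1)}$ is a \emph{dual} Banach $\A$-bimodule, and I would use (or establish) the principle that a bounded derivation from $\A$ into a dual Banach $\A$-bimodule $\X^{*}$ always extends to a bounded derivation $\widetilde D\colon\2\to\X^{*}$ when $\2$ carries the first Arens product. Explicitly, one takes the second transpose $D^{**}\colon\2\to\A^{(n+2)}$ and composes it with the natural projection $\A^{(n+2)}\twoheadrightarrow\A^{(n)}$; it is then straightforward that the resulting map $\widetilde D$ agrees with $D$ on $\A$, and the point is to check that $\widetilde D$ satisfies the derivation identity throughout $\2$. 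For this one verifies the identity first on $\A\times\A$ (where it is just that of $D$), and then propagates it to $\2\times\A$ and finally to $\2\times\2$ using the $w^{*}$-density of $\A$ in $\2$ together with the separate $w^{*}$-continuity built into the first Arens product and into the module actions on $\A^{(n)}$. I expect this verification to be the main obstacle: it requires matching, term by term, the Arens-type extension of the $\A$-action with the intrinsic $\2$-module structure on $(\2)^{(n-2)}$, and it is exactly here that the hypothesis $n\geq 2$ (equivalently, that $\A^{(n)}$ is a dual module) enters.

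Granting the lifting, the theorem is immediate. Since $\widetilde D\colon\2\to(\2)^{(n-2)}$ is a bounded derivation and $\2$ is $n$-weakly amenable, there exists $\Phi\in\A^{(n)}$ with $\widetilde D(\M)=\M\cdot\Phi-\Phi\cdot\M$ for all $\M\in\2$. Restricting to $\A$, using that $\widetilde D$ extends $D$ and the compatibility of the module actions noted above, we obtain $D(a)=a\cdot\Phi-\Phi\cdot a$ for every $a\in\A$; that is, $D$ is the inner derivation implemented by $\Phi$. Since $D$ was arbitrary, $\A$ is $n$-weakly amenable, which is Theorem~\ref{1}.
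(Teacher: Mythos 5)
Your high-level strategy (lift $D$ via its second adjoint, kill the lift using the hypothesis on $\2$, restrict back) is indeed the paper's strategy, but two of your steps have genuine gaps. First, there is a degree mismatch at the end: your lifted map $\widetilde D$ (the composite of $D^{**}$ with the projection $\A^{(n+2)}\twoheadrightarrow\A^{(n)}$) lands in $\A^{(n)}=(\2)^{(n-2)}$, whereas $n$-weak amenability of $\2$ only controls derivations into $(\2)^{(n)}=\A^{(n+2)}$. To invoke the hypothesis you must either view $\widetilde D$ as a derivation into $\A^{(n+2)}$ via the canonical embedding $\A^{(n)}\hookrightarrow\A^{(n+2)}$ (after checking this is a morphism of $\2$-modules; the implementing element then lives in $\A^{(n+2)}$, not in $\A^{(n)}$ as you assert, and must itself be projected back down), or appeal to the Dales--Ghahramani--Gr{\o}nb{\ae}k result that $(m+2)$-weak amenability implies $m$-weak amenability, which does not cover $n=2$ (there is no ``$0$-weak amenability''). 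The paper sidesteps this: for even $n$ it uses $D^{**}$ itself, with no projection, as a derivation into $(\2)^{(n)}$, and only for odd $n\geq 3$ does it project, reading the projected map as a derivation into $(\2)^{(n)}$ through the canonical embedding. As written, your final step is a non sequitur.

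Second, and more seriously, the ``principle'' you propose to use --- that a bounded derivation from $\A$ into any dual Banach $\A$-bimodule $\X^{*}$ extends to a bounded derivation on $(\2,\Box)$ --- is false. For $n=1$ the module $\A^{*}$ is also a dual bimodule, so your principle would settle the case $n=1$ as well; but that is precisely the long-standing open problem the rest of the paper is about, and Lemma \ref{4}$(ii)$ shows that the lift $J_0^{*}\circ D^{**}$ is a derivation only under the nontrivial condition $\pi^{***r*}(D^{**}(\A^{**}),\A)\subseteq\A^{*}$. Hence ``$\A^{(n)}$ is a dual module'' cannot be where $n\geq 2$ enters. The actual mechanism (the heart of Lemma \ref{4}$(i)$) is the inclusion $\pi^{(3n)}(\A^{**},\A^{(2n-2)})\subseteq\A^{(2n-2)}$, which holds for $n\geq 2$ because $\A^{**}$ already sits inside the predual $\A^{(2n-2)}$, and which for $n=1$ is equivalent to $\A$ being a left ideal in $\2$. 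You correctly flag the verification of the derivation identity as ``the main obstacle,'' but you neither carry it out nor identify the feature that makes it succeed exactly when $n\geq 2$; as it stands the proposal assumes the key lemma rather than proving it.
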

Then we consider the case $n=1$, which is a long-standing open problem with  a slightly different feature from that of  $n\geq 2$. This case   has been investigated and partially answered by many authors (see Theorem \ref{3}, in which we rearrange some known answers from \cite{EF, GL, GLW, JMV}). As a consequence  of our general  method of proof (for the case $n=1$),  we  present the next positive answer; in which, $\pi$ denotes the product of $\A,$ $\pi^*:\A^*\times\A\rightarrow\A^*$ is defined by $$\langle\pi^*(a^*, a), b\rangle=\langle a^*,\pi(a,b)\rangle, \ \ \ (a^*\in\A^*, a,b\in\A),$$ and $Z_\ell(\pi^*)$ is the left topological centre of $\pi^*,$ (see the next section).
\begin{theorem}\label{2} Let $\A$ be a Banach algebra such that every derivation $D:\A\rightarrow \A^*$ satisfies  $D^{**}(\A^{**})\subseteq Z_\ell(\pi^*)$. Then  weak amenability of $\2$ implies that of $\A$.
\end{theorem}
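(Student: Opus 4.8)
The strategy is the familiar three–step one: lift the derivation to $\A^{**}$, invoke weak amenability there, and transport the implementing functional back to $\A^{*}$. So fix a bounded derivation $D\colon\A\to\A^{*}$; the goal is to show that $D$ is inner.

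Step 1 (the lifting). I would first show that, under the hypothesis, the second adjoint $D^{**}\colon\A^{**}\to\A^{***}$ is a bounded derivation, where $\A^{**}$ is given the first Arens product $\square$ and $\A^{***}=(\A^{**})^{*}$ carries its canonical structure as dual $\A^{**}$-bimodule. Since $D^{**}$ is $w^{*}$--$w^{*}$-continuous and $D^{**}(\hat a)=\widehat{D(a)}$ for $a\in\A$, it extends $D$, so the identity $D^{**}(m\square n)=m\cdot D^{**}(n)+D^{**}(m)\cdot n$ already holds on $\A\times\A$. One extends it first to $\A\times\A^{**}$ and then to $\A^{**}\times\A^{**}$ by a $w^{*}$-density argument, for which it suffices to check that, with one variable fixed, each of the three terms is separately $w^{*}$-continuous in the other. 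Two of the three are $w^{*}$-continuous for formal reasons: the left factor of $\square$ is the continuous one, and $\Psi\mapsto\Psi\cdot n$ is $w^{*}$-continuous on $\A^{***}$ for fixed $n$. The one term that is not automatically $w^{*}$-continuous is $m\mapsto m\cdot D^{**}(n)$, and this is exactly what the assumption $D^{**}(\A^{**})\subseteq Z_{\ell}(\pi^{*})$ repairs: membership of $D^{**}(n)$ in the left topological centre of $\pi^{*}$ is precisely the assertion that this map is $w^{*}$--$w^{*}$-continuous. I expect this step to be the main obstacle, and the hypothesis is tailored exactly to it.

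Step 2 (weak amenability of $\A^{**}$). Now $D^{**}$ is a bounded derivation from $\A^{**}$ into its dual module $\A^{***}$, so weak amenability of $\A^{**}$ gives $\Phi\in\A^{***}$ with $D^{**}(m)=m\cdot\Phi-\Phi\cdot m$ for all $m\in\A^{**}$. Step 3 (coming back down). Let $\kappa\colon\A\to\A^{**}$ be the canonical embedding and $\kappa^{*}\colon\A^{***}\to\A^{*}$ its adjoint, so that $\A^{***}=\widehat{\A^{*}}\oplus(\widehat{\A})^{\perp}$. Using the formulas for $\square$ one checks readily that, when $\A^{***}$ is viewed as an $\A$-bimodule through $\kappa$, both summands are $\A$-subbimodules, that $\kappa^{*}$ is an $\A$-bimodule homomorphism whose restriction to $\widehat{\A^{*}}$ recovers the original $\A$-bimodule structure of $\A^{*}$, and that $\kappa^{*}\circ D^{**}\circ\kappa=D$. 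Applying $\kappa^{*}$ to $D^{**}(\hat a)=\hat a\cdot\Phi-\Phi\cdot\hat a$ and putting $f:=\kappa^{*}(\Phi)\in\A^{*}$ then yields $D(a)=a\cdot f-f\cdot a$ for every $a\in\A$, so $D$ is inner and $\A$ is weakly amenable. Apart from Step 1, the only point needing care is the bookkeeping with the Arens actions in Step 3: one must use the first Arens product consistently so that $\widehat{\A^{*}}$ genuinely is an $\A$-subbimodule carrying the correct induced structure.
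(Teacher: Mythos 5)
The gap is in your Step 1, and it is exactly the trap the paper is written to expose. You assert that $D^{**}(n)\in Z_\ell(\pi^*)$ ``is precisely the assertion'' that $m\mapsto m\cdot D^{**}(n)$ is $w^*$--$w^*$-continuous. It is not. Write $\Phi=D^{**}(n)$. For the canonical dual $(\A^{**},\Box)$-bimodule structure on $\A^{***}$ the left action satisfies $\langle m\cdot\Phi,k\rangle=\langle\Phi,k\Box m\rangle$, so $w^*$-continuity of $m\mapsto m\cdot\Phi$ means that $k\Box m$ is, as seen by $\Phi$, continuous in the \emph{second} variable of $\Box$; equivalently $\pi^{****}(\Phi,k)\in\A^*$ for every $k\in\A^{**}$. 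By contrast, $\Phi\in Z_\ell(\pi^*)$ says that $m\mapsto\pi^{****}(\Phi,m)=\Phi\cdot m$ is $w^*$--$w^*$-continuous, i.e.\ that $\langle\Phi,m\Box F\rangle$ is continuous in the \emph{first} variable of $\Box$ --- a different condition, which does not imply the first (even $\Phi\in\A^*$ fails to give it when $\A$ is Arens irregular). So your density argument does not close, and indeed $D^{**}$ need \emph{not} be a derivation under the stated hypothesis: as recorded in the paper's ``Further consequences,'' $D^{**}$ is a derivation iff $\pi^{****}(D^{**}(\A^{**}),\A^{**})\subseteq\A^*$, and the introduction explicitly notes that earlier claims that $D^{**}$ is a derivation tacitly used Arens regularity. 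You have reproduced that known error.

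The paper's fix is to lift ${J_0}^*\circ D^{**}:\2\to\A^{***}$ rather than $D^{**}$ itself. Composing with ${J_0}^*$ weakens the continuity requirement to one tested only against elements of $\A$: Lemma \ref{4}$(ii)$ shows that ${J_0}^*\circ D^{**}$ is a derivation iff $\pi^{***r*}(D^{**}(\A^{**}),\A)\subseteq\A^*$, and \emph{that} condition does follow from $D^{**}(\A^{**})\subseteq Z_\ell(\pi^*)$ by the short limit computation in the proof of Theorem \ref{2} (one pairs $\pi^{***r*}(D^{**}(b^{**}),a)$ with a $w^*$-convergent net and uses exactly the continuity that $Z_\ell(\pi^*)$ provides). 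Your Steps 2 and 3 are fine and go through verbatim once $D^{**}$ is replaced by ${J_0}^*\circ D^{**}$: weak amenability of $\2$ gives ${J_0}^*\circ D^{**}=\delta_{a^{***}}$ and then $D=\delta_{{J_0}^*(a^{***})}$. So the proposal is repairable, but as written Step 1 proves a false statement from a misread of the hypothesis.
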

 As a rapid consequence we get the next result, part $(ii)$ of which sharpens  {\cite[Corollary 7.5]{DPV}} and also {\cite[Theorem 2.1]{EF}} (note that $WAP(\A)\subseteq \A^*\subseteq Z_\ell(\pi^*)$); indeed, it  shows that the hypothesis of Arens regularity of $\A$ in \cite[Corollary 7.5]{DPV} is superfluous.
\begin{corollary}\label{3'}  For a Banach algebra $\A$, in either of the following cases,  the weak amenability of $\A^{**}$ implies that of $\A.$

$(i)$ If $\pi^*$ is Arens regular.

$(ii)$ If every derivation from $\A$ into $\A^*$ is weakly compact.
\end{corollary}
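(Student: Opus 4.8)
The plan is to obtain both parts as immediate consequences of Theorem~\ref{2}, by checking in each case that \emph{every} derivation $D:\A\rightarrow\A^*$ already satisfies $D^{**}(\A^{**})\subseteq Z_\ell(\pi^*)$; Theorem~\ref{2} then applies verbatim.

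For part $(i)$, I would start from the definition: $Z_\ell(\pi^*)$ is the set of those elements of $\A^{***}$ at which the two Arens extensions of the bounded bilinear map $\pi^*:\A^*\times\A\rightarrow\A^*$ agree. To say that $\pi^*$ is Arens regular is precisely to say that these extensions coincide identically, i.e.\ $Z_\ell(\pi^*)=\A^{***}$. Since $D^{**}$ always carries $\A^{**}$ into $\A^{***}$, the hypothesis of Theorem~\ref{2} is met vacuously for every bounded $D:\A\rightarrow\A^*$, and so weak amenability of $\2$ forces that of $\A$.

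For part $(ii)$, I would appeal to the classical bidual characterisation of weak compactness (Gantmacher's theorem): a bounded operator $D:\A\rightarrow\A^*$ is weakly compact if and only if $D^{**}$ maps $\A^{**}$ into the canonical image of $\A^*$ in $\A^{***}$. Combining this with the always-valid inclusion $\A^*\subseteq Z_\ell(\pi^*)$ — which is the analogue, for the bilinear map $\pi^*$, of the elementary fact that a Banach algebra sits inside the topological centre of its bidual, and is the inclusion alluded to in the statement ($WAP(\A)\subseteq\A^*\subseteq Z_\ell(\pi^*)$) — we see that weak compactness of every derivation $D:\A\rightarrow\A^*$ gives $D^{**}(\A^{**})\subseteq\A^*\subseteq Z_\ell(\pi^*)$ for every such $D$, so Theorem~\ref{2} again yields the conclusion.

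I anticipate no genuine difficulty: the only two ingredients requiring a line of justification are the equivalence of Arens regularity of $\pi^*$ with the equality $Z_\ell(\pi^*)=\A^{***}$, and the Gantmacher-type characterisation of weak compactness, both of which are standard; everything else is bookkeeping. The comparisons with \cite[Corollary 7.5]{DPV} and \cite[Theorem 2.1]{EF} then reduce to the observations that part $(ii)$ carries no Arens-regularity assumption and that any derivation whose range lies in $WAP(\A)$ has range in $Z_\ell(\pi^*)$, so those results are special cases.
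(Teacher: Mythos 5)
Your proposal is correct and coincides with the paper's intended argument: Corollary \ref{3'} is stated there as an immediate consequence of Theorem \ref{2}, with $(i)$ following from $Z_\ell(\pi^*)=\A^{***}$ under Arens regularity of $\pi^*$ and $(ii)$ from Gantmacher's theorem together with the canonical inclusion $\A^*\subseteq Z_\ell(\pi^*)$ noted in the paper. Nothing is missing.
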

The influence of the impressive paper {\cite{GLW}} of Ghahramani {\it et al.}  on our work should be evident. It should finally be remarked that part $(ii)$ of  Corollary \ref{3'} actually demonstrates what Ghahramani {\it et al.} claimed in a remark  following {\cite[Theorem 2.3]{GLW}}. Indeed, as we shall see in the proof of Theorem \ref{2},   ${J_0}^*\circ D^{**}$ is a derivation ($J_0:\A\rightarrow\A^{**}$ denotes  the canonical embedding),  however they claimed that $D^{**}$ is a derivation and in their calculation of limits  they used the Arens regularity of $\A$; see also a remark just after the proof of {\cite[Corollary 7.5]{DPV}}.\\
\section*{The proofs}
To prepare the proofs, let us first fix some notations and preliminaries.
 Following the seminal work ~\cite{A} of  Arens,   every bounded bilinear map $f: {\X}\times {\Y}\rightarrow {\Z}$  (on normed spaces) has two natural but, in general, different  extensions  $f^{***}$ and $f^{r***r}$  from $\X^{**}\times\Y^{**}$  to $\Z^{**}$. Here  the flip map $f^{r}$ of $f$ is defined  by $f^{r}(y,x)= f(x,y),$
the adjoint  $ f^*:{\Z}^*\times {\X} \rightarrow {\Y}^*$ of $f$  is defined by
 $$\langle f^*(z^*, x),y\rangle=\langle z^*,f(x,y)\rangle \ \ \ \ \ \   (x\in {\X},y\in {\Y}\ \ {\rm and}\ \  z^*\in {\Z}^*);$$ and also the second and  third adjoints $f^{**}$ and $f^{***}$ of $f$ are defined by   $f^{**}=(f^*)^*$ and $f^{***}=(f^{**})^*$, respectively. Continuing this process one can define  the  higher  adjoints $f^{(n)},$  $(n\in {\mathbb N})$.

  We also define the left topological centre $Z_\ell(f)$ of $f$ by \bea Z_\ell(f)&=&\{x^{**}\in {\X}^{**}; y^{**}\longrightarrow f^{***}(x^{**},y^{**}) : {\Y}^{**}\longrightarrow {\Z}^{**} \ {\rm is} \ w^{*}-w^{*}-{\rm continuous} \}.\eea
A bounded bilinear mapping $f$ is said to be Arens regular if $f^{***}=f^{r***r}$, or equivalently  $Z_\ell(f)={\X}^{**}.$

It should be remarked that, in the case where $\pi$ is the multiplication of a Banach algebra $\A,$ then $\pi^{***}$ and $\pi^{r***r}$ are
actually the
 first and second Arens products on $\2$,
respectively.
From now on, we only deal with the first Arens product $\Box$ and our results are based on $(A^{**}, \Box)$. Similar results can be derived if one uses the second Arens product instead of the first one.

Consider $\A$ as a Banach $\A$-module equipped with its own multiplication $\pi$. Then $({\pi}^{r*r},\A^*,\pi^*)$ is the natural dual   Banach $\A-$module, in which, ${\pi}^{r*r}$ and $\pi^*$ denote its left and right module actions, respectively. Similarly, the $n^{\rm th}$ dual  $\A^{(n)}$ of $\A$ can be made into a Banach $\A-$module in a natural fashion.  A direct verification reveals that $({\pi}^{(3n)}, \A^{(2n)}, \pi^{(3n)})$  is a Banach $\A^{**}-$module.  It  induces the natural dual Banach $\A^{**}-$module $({\pi}^{(3n)r*r}, \A^{(2n+1)}, \pi^{(3n+1)})$ which will be used in the sequel.  Note that we have also $({\pi}^{r*r(3n)}, \A^{(2n+1)}, \pi^{(3n+1)})$  as a Banach $\A^{**}-$module that induced by $({\pi}^{r*r},\A^*,\pi^*)$. It should be mentioned  that these two  actions on $\A^{(2n+1)}$ do not coincide, in general. For more information on the equality of these actions in the case where $n=1$ see \cite{DPV, MV}.

 From now on, we identify (an element of) a normed space  with its canonical image in its second dual; however,   we also use $J_n:\A^{(n)}\rightarrow \A^{(n+2)}$ for  the canonical embedding.

 We commence with the next lemma.
\begin{lemma}\label{4} Let $\A$ be a Banach algebra, $n\in\mathbb{N}$ and let $D:\A\rightarrow \A^{(2n-1)}$ be a derivation.

 (i) If $n\geq 2$ then $[{(J_{2n-2})}^*\circ D^{**}]: \2\rightarrow {\A}^{(2n+1)}$ is a derivation.

 (ii) For $n=1$, $[{J_0}^*\circ D^{**}]: \2\rightarrow{\A}^{***}$ is a derivation if and only if $\pi^{***r*}(D^{**}(\A^{**}),\A)\subseteq \A^*$.
\end{lemma}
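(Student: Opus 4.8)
The plan is to handle both parts by a single mechanism: realise the first Arens product by iterated weak$^{*}$ limits and carry the derivation identity on $\A$ across $D^{**}$, which, being a second adjoint, is weak$^{*}$--weak$^{*}$ continuous and (after the standing identifications) restricts to $D$ on $\A$. Write $\Lambda=(J_{2n-2})^{*}$ in part (i) and $\Lambda=J_{0}^{*}$ in part (ii). In either case $\Lambda$ is the adjoint of a canonical embedding, hence weak$^{*}$--weak$^{*}$ continuous and an $\A$-bimodule morphism, and since $\Lambda\circ J_{2n-1}$ (resp. $\Lambda\circ J_{1}$) is the identity, $\Lambda\circ D^{**}$ restricts to $D$ on $\A$. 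Fix $m,m'\in\2$ and choose nets $(a_{\alpha}),(b_{\beta})$ in $\A$ with $a_{\alpha}\to m$, $b_{\beta}\to m'$ weak$^{*}$, so that $m\Box m'=\lim_{\alpha}\lim_{\beta}a_{\alpha}b_{\beta}$ weak$^{*}$. Pushing $\Lambda\circ D^{**}$ through both limits and using $D(a_{\alpha}b_{\beta})=a_{\alpha}\cdot D(b_{\beta})+D(a_{\alpha})\cdot b_{\beta}$ in $\A^{(2n-1)}$ gives
\[
(\Lambda\circ D^{**})(m\Box m')=\lim_{\alpha}\lim_{\beta}a_{\alpha}\cdot D(b_{\beta})+\lim_{\alpha}\lim_{\beta}D(a_{\alpha})\cdot b_{\beta},
\]
all limits weak$^{*}$. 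The task is then to identify the two iterated limits with the terms $m\cdot(\Lambda\circ D^{**})(m')$ and $(\Lambda\circ D^{**})(m)\cdot m'$ of the derivation identity for the \emph{standard} $\2$-bimodule structure $(\pi^{(3n)r*r},\A^{(2n+1)},\pi^{(3n+1)})$. The first iterated limit always behaves: for $b\in\A\subseteq Z_{\ell}(\pi)$ the map $q\mapsto b\Box q$ is weak$^{*}$--weak$^{*}$ continuous, and after applying $\Lambda$ one pairs only with elements of $\A^{(2n-2)}$ (with $\A$ when $n=1$), which are weak$^{*}$ continuous on the relevant dual; a short manipulation shows $\lim_{\alpha}\lim_{\beta}a_{\alpha}\cdot D(b_{\beta})=m\cdot(\Lambda\circ D^{**})(m')$ unconditionally. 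So everything rests on the second iterated limit.

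For part (i), $n\ge 2$: here $\A^{(2n-2)}$ is itself a dual Banach space and a Banach $\2$-bimodule, so the canonical embedding $J_{2n-2}\colon\A^{(2n-2)}\hookrightarrow\A^{(2n)}$ is a $\2$-bimodule morphism (the canonical embedding of a Banach $\2$-module into its bidual always is), and hence so is its adjoint $(J_{2n-2})^{*}$. This is precisely what reconciles the two (a priori distinct, cf. the preliminaries) left $\2$-actions on $\A^{(2n+1)}$; since the two right actions already coincide (both $\pi^{(3n+1)}$), the second iterated limit collapses to $((J_{2n-2})^{*}\circ D^{**})(m)\cdot m'$ with no further hypothesis, and then, re-embedding $\A^{(2n-1)}\hookrightarrow\A^{(2n+1)}$ (also a $\2$-bimodule morphism), $(J_{2n-2})^{*}\circ D^{**}$ is a derivation.

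For part (ii), $n=1$: now $\A^{(0)}=\A$ need not be a dual space, $J_{0}$ is only an $\A$-bimodule morphism, and $\Lambda=J_{0}^{*}$ need not reconcile the two left actions, so the computation must be carried to its end. Writing $\tilde D=J_{0}^{*}\circ D^{**}$ (viewed into $\A^{***}$ via $\A^{*}\hookrightarrow\A^{***}$) and testing the candidate identity $\tilde D(m\Box m')=m\cdot\tilde D(m')+\tilde D(m)\cdot m'$ against $n\in\2$, after cancelling the (always valid) first term one is left with the requirement
\[
\langle D^{**}(m),\,m'\Box n\rangle=\langle m'\Box n,\,J_{0}^{*}D^{**}(m)\rangle\qquad(m,m',n\in\2),
\]
i.e. $D^{**}(m)$ should agree with its canonical projection onto $J_{1}(\A^{*})$ on every element of the form $m'\Box n$. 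Specialising $n=a\in\A$, this is exactly the statement $\pi^{***r*}(D^{**}(m),a)\in\A^{*}$; conversely, the hypothesis $\pi^{***r*}(D^{**}(\2),\A)\subseteq\A^{*}$ feeds back, through the weak$^{*}$ continuity it provides and the fact that the elements $m'\Box a$ ($a\in\A$) span a weak$^{*}$-dense subspace of $\2$, to yield the displayed identity for all $n$, hence the derivation property. Thus $\tilde D$ is a derivation if and only if $\pi^{***r*}(D^{**}(\2),\A)\subseteq\A^{*}$.

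The step I expect to be hardest is the bookkeeping among the several non-coinciding Arens-type actions on the iterated duals, and, for $n=1$, pinning down the exact point at which one must invoke the weak$^{*}$ continuity encoded in the hypothesis together with the verification that this single hypothesis yields the identity tested against every $n\in\2$ (not just $n\in\A$); for $n\ge 2$ the corresponding step is automatic precisely because $\A^{(2n-2)}$ is then a dual space, so that $(J_{2n-2})^{*}$ is a genuine $\2$-bimodule morphism.
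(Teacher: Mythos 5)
Your overall strategy coincides with the paper's: realise $\Box$ by iterated weak$^{*}$ limits, observe that the left-action term $\lim_\alpha\lim_\beta a_\alpha\cdot D(b_\beta)$ is handled unconditionally, and isolate the right-action term as the only obstruction. Part (i) is essentially the paper's argument in different clothing: your statement that $(J_{2n-2})^{*}$ intertwines the relevant actions because $\A^{(2n-2)}$ is a Banach $\2$-module is exactly the paper's inclusion $\pi^{(3n)}(\A^{**},\A^{(2n-2)})\subseteq\A^{(2n-2)}$, which holds for $n\geq 2$ and fails for $n=1$ unless $\A$ is a left ideal in $\2$. (A small slip: you say this ``reconciles the two left actions,'' but by your own decomposition the term being rescued is the right-action term; the content is nonetheless the right one.)

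Part (ii) contains a genuine gap. Your displayed ``requirement'' $\langle D^{**}(m),m'\Box n\rangle=\langle m'\Box n,J_0^{*}D^{**}(m)\rangle$ for all $n\in\2$ is not what the derivation identity gives when tested against $n$: pairing $\pi^{****}\bigl(J_0^{*}D^{**}(m),m'\bigr)$ with $n$ does give $\langle m'\Box n,J_0^{*}D^{**}(m)\rangle$, but pairing $J_0^{*}\bigl(\pi^{****}(D^{**}(m),m')\bigr)$ with $n$ gives $\langle n,\pi^{****}(D^{**}(m),m')|_{\A}\rangle$, which agrees with $\langle D^{**}(m),m'\Box n\rangle$ only when $n\in\A$. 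The correct reformulation is therefore the one obtained by specialising to $n=a\in\A$, i.e.\ (after the paper's rewriting $\langle\pi^{****}(\Phi,b^{**}),a\rangle=\langle\pi^{***r*}(\Phi,a),b^{**}\rangle$) precisely the membership $\pi^{***r*}(D^{**}(\2),\A)\subseteq\A^{*}$. Your attempt to recover the identity against arbitrary $n\in\2$ by ``weak$^{*}$-density of the elements $m'\Box a$'' does not work: that set need not be weak$^{*}$-dense (consider $\A$ with small $\A^{2}$), and even where it is, one cannot pass a limit $a_\gamma\to n$ through $n\mapsto m'\Box n$ (weak$^{*}$-continuous only for $m'\in Z_\ell(\pi)$) nor through the functional $D^{**}(m)$, which is not weak$^{*}$-continuous on $\2$. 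You have in fact put your finger on exactly the delicate step that the paper itself passes over with a bare ``which holds if and only if'' when it shrinks the test space from $\2$ to $\A$; but locating the crux is not the same as resolving it, and your density argument, as written, is invalid. To match the paper you should abandon testing against general $n$ and argue as it does: once the left-action term is disposed of, reduce the identity to an equality of elements of the form $\pi^{***r*}(\cdot,a)$ paired with $b^{**}$, observe that $\pi^{***r*}(J_0^{*}D^{**}(a^{**}),a)=\pi^{**r}(J_0^{*}D^{**}(a^{**}),a)\in\A^{*}$ always, and that it agrees with $\pi^{***r*}(D^{**}(a^{**}),a)$ on $\A$, so that the identity holds exactly when the latter also lies in $\A^{*}$.
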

\begin{proof} $(i).$ It is enough to show that for any $a^{**}, b^{**}\in A^{**}$
$$[(J_{2n-2})^*\circ D^{**}](a^{**}\Box \ b^{**})=\pi^{(3n+1)}([(J_{2n-2})^*\circ D^{**}](a^{**}),b^{**})+ \pi^{(3n)r*r}(a^{**},[(J_{2n-2})^*\circ D^{**}](b^{**})).$$
To this end let $\{a_{\alpha}\}$ and  $\{b_{\beta}\}$ be bounded nets in $\A,$
$w^{*}-$converging to $a^{**}$ and $b^{**}$, respectively. Then
\bea
D^{**}(a^{**}\Box b^{**})&=&w^*-\lim_\alpha w^*-\lim_\beta D(a_\alpha b_\beta)\\
&=&w^*-\lim_\alpha w^*-\lim_\beta [\pi^{(3n-2)}(D(a_\alpha),b_\beta)+\pi^{(3n-3)r*r}(a_\alpha,D(b_\beta))]\\
&=&\pi^{(3n+1)}(D^{**}(a^{**}),b^{**})+\pi^{(3n-3)r*r***}(a^{**},D^{**}(b^{**})).
\eea
For each $a^{(2n-2)}\in \A^{(2n-2)},$

$\langle (J_{2n-2})^*(\pi^{(3n-3)r*r***}(a^{**}, D^{**}(b^{**}))),a^{(2n-2)}\rangle=$
\bea  \ \ \ \ \ \ \  \ \ \ \ \ \ \ \ \ \ \ \ \ \  \ \ \ \ \ \ \ \ \  \ \ \ \ \ \ \ &=&\lim_\alpha\lim_\beta \langle D(b_\beta),\pi^{(3n-3)}(a^{(2n-2)},a_\alpha)\rangle\\
&=&\lim_\alpha\langle D^{**}(b^{**}),\pi^{(3n-3)}(a^{(2n-2)},a_\alpha)\rangle\\
&=&\lim_\alpha\langle [(J_{2n-2})^*\circ D^{**}](b^{**}),\pi^{(3n-3)}(a^{(2n-2)},a_\alpha)\rangle\\
&=&\langle [(J_{2n-2})^*\circ D^{**}](b^{**}),\pi^{(3n)}(a^{(2n-2)},a^{**})\rangle\\
&=&\langle\pi^{(3n)r*r}(a^{**},[(J_{2n-2})^*\circ D^{**}](b^{**})),a^{(2n-2)}\rangle.
\eea

 Since for $n\geq 2$, $$\pi^{(3n)}(\A^{**},\A^{(2n-2)})=\pi^{(3n-3)}(\A^{**},\A^{(2n-2)})\subseteq\pi^{(3n-3)}(\A^{(2n-2)},\A^{(2n-2)})\subseteq\A^{(2n-2)}$$
(note that the same inclusion may not valid  for the case $n=1$; indeed, it holds if and only if $\pi^{***}(\A^{**},\A)\subseteq\A$, or equivalently, $\A$ is a left ideal in $\2$!), we get $\pi^{(3n)}(b^{**},a^{(2n-2)})\in\A^{(2n-2)}$ and so

$\langle (J_{2n-2})^*(\pi^{(3n+1)}(D^{**}(a^{**}),b^{**})),a^{(2n-2)}\rangle=$
\bea
\ \ \ \ \ \ \ \ \ \ \ \ \ \ \ \ \ \ \ \ \ \ \ \ \ \ \ \ \ \ \ \  &=&\langle D^{**}(a^{**}),\pi^{(3n)}(b^{**},a^{(2n-2)})\rangle\\
&=&\langle [(J_{2n-2})^*\circ D^{**}](a^{**}),\pi^{(3n)}(b^{**},a^{(2n-2)})\rangle\\
&=&\langle\pi^{(3n+1)}([(J_{2n-2})^*\circ D^{**}](a^{**}),b^{**}),a^{(2n-2)}\rangle.
\eea
Therefore

$[(J_{2n-2})^*\circ D^{**}](a^{**}\Box b^{**})=$
\bea  \ \ \ \ \ \ \ \ \  \  &=&(J_{2n-2})^*(\pi^{(3n+1)}(D^{**}(a^{**}),b^{**}))+ (J_{2n-2})^*(\pi^{(3n-3)r*r***}(a^{**},D^{**}(b^{**})))\\
&=&\pi^{(3n+1)}([(J_{2n-2})^*\circ D^{**}](a^{**}),b^{**})+ \pi^{(3n)r*r}(a^{**},[(J_{2n-2})^*\circ D^{**}](b^{**}));
\eea  as required.\\

For $(ii)$, examining the above proof for the case $n=1$  shows that,  ${J_0}^*\circ D^{**}:\2\rightarrow \A^{***}$ is a derivation if and only if
 $${J_0}^*(\pi^{****}(D^{**}(a^{**}),b^{**}))=\pi^{****}([{J_0}^*\circ D^{**}](a^{**}),b^{**})\ \ \ (a^{**}, b^{**}\in \A^{**}),$$
 which holds if and only if
  $$\langle\pi^{****}(D^{**}(a^{**}),b^{**}),a\rangle=\langle\pi^{****}([{J_0}^*\circ D^{**}](a^{**}),b^{**}),a\rangle\ \ \ \ \ \ (a\in \A);$$ or equivalently, $$\langle\pi^{***r*}(D^{**}(a^{**}),a),b^{**}\rangle=\langle\pi^{***r*}([{J_0}^* \circ D^{**}](a^{**}),a),b^{**}\rangle.$$
 As $\pi^{***r*}([{J_0}^* \circ D^{**}](a^{**}),a)=\pi^{**r}([{J_0}^* \circ D^{**}](a^{**}),a)\in\A^*$ and also $\pi^{***r*}(D^{**}(a^{**}),a)_{|\A}=\pi^{**r}([{J_0}^* \circ D^{**}](a^{**}),a)$; the map  $[{J_0}^*\circ D^{**}]: \2\rightarrow{\A}^{***}$ is a derivation if and only if   $\pi^{***r*}(D^{**}(a^{**}),a)\in\A^*$, as claimed.\\
\end{proof}
We are now ready to present the proofs of the main results.\\

 \noindent{\bf Proof of Theorem \ref{1}.}
Let $n\in\mathbb{N}$,   $D:\A\rightarrow \A^{(2n)}$ be a derivation and let $a^{**}, b^{**}\in A^{**}.$  As $(\pi^{(3n+3)},\A^{(2n+2)},\pi^{(3n+3)})$ is a Banach $\2-$module, a  standard double limit process argument$-$similar to what has been used at  the beginning of the proof of the preceding lemma$-$ shows  that   $D^{**}:\A^{**}\rightarrow \A^{(2n+2)}$ satisfies
$$D^{**}(a^{**}\Box b^{**})=\pi^{(3n+3)}(D^{**}(a^{**}),b^{**})+\pi^{(3n+3)}(a^{**},D^{**}(b^{**})).$$
Therefore $D^{**}$ is a derivation and so   $(2n)-$weak amenability of $\2$ implies that  $D^{**}=\delta_{a^{(2n+2)}}$ for some $a^{(2n+2)}\in \A^{(2n+2)}$. Now we get  $D=\delta_{{(J_{2n-1})^*}(a^{(2n+2)})}.$ Thus $D$ is inner and so $\A$ is $(2n)-$weakly amenable.

Now for the odd case, suppose that $\A^{**}$ is $(2n-1)-$weakly amenable and let    $D:\A\rightarrow \A^{(2n-1)}$ be a derivation. Then as we have seen in Lemma \ref{4}, when $n\geq2$ the mapping  $[{(J_{2n-2})}^*\circ D^{**}]: \2\rightarrow {\A}^{(2n+1)}$ is a derivation. But then, by the assumption,  $[{(J_{2n-2})}^*\circ D^{**}]=\delta_{a^{(2n+1)}}$ for some $a^{(2n+1)}\in \A^{(2n+1)}.$ It follows that  $D=\delta_{{(J_{2n-2})^*}(a^{(2n+1)})}$, so that $D$ is inner, as claimed.  \hspace{6cm}\ \ \ \ \ \ \ \ \ \ \ \ \   \ $\Box$\\

\noindent{\bf Proof of Theorem \ref{2}.} Let $a^{**},b^{**}\in\A^{**}, a\in \A$ and let  $\{a^{**}_{\alpha}\}$  be a  net in $\A^{**}$
$w^{*}-$converging to $a^{**}.$ As $D^{**}(b^{**})\in Z_\ell(\pi^*),$
\bea
\lim_\alpha<\pi^{***r*}(D^{**}(b^{**}),a),a^{**}_\alpha>&=&\lim_\alpha<D^{**}(b^{**}),\pi^{***}(a^{**}_\alpha,a)>\\
&=&\lim_\alpha<\pi^{****}(D^{**}(b^{**}),a^{**}_\alpha),a>\\
&=&<\pi^{****}(D^{**}(b^{**}),a^{**}),a>\\
&=&<D^{**}(b^{**}),\pi^{***}(a^{**},a)>\\
&=&<\pi^{***r*}(D^{**}(b^{**}),a),a^{**}>.
\eea
And this means that  $\pi^{***r*}(D^{**}(b^{**}),a)\in\A^*$, so that  ${J_0}^*\circ D^{**}$ is derivation by Lemma \ref{4}. Now by the assumption ${J_0}^*\circ D^{**}=\delta_{a^{***}},$ for some $a^{***}\in\A^{***}$, and this follows that $D=\delta_{{J_0}^*(a^{***})},$ so that $\A$ is weakly amenable.     \hspace{10cm}  \ \ \ \ \ \ \ \ \ \  \ $\Box$\\
\section*{Further consequences}
Recall  that for a derivation $D:\A\rightarrow\A^*$ the second adjoint $D^{**}$ is a derivation if and only if  $\pi^{r*r***}(a^{**},D^{**}(b^{**}))=\pi^{***r*r}(a^{**},D^{**}(b^{**})),$ for every $a^{**},b^{**}\in\2;$
or equivalently, $\pi^{****}(D^{**}(\A^{**}),\A^{**})\subseteq \A^*$; see {\cite[Theorem 7.1]{DPV}} and also {\cite[Theorem 4.2]{MV}} for a more general case. While, as Lemma  \ref{4}   demonstrates,  ${J_0}^*\circ D^{**}$ is a derivation if and only if $\pi^{***r*}(D^{**}(\A^{**}),\A)\subseteq \A^*$. In the next result we investigate the interrelation between   $D^{**}$ and ${J_0}^*\circ D^{**}.$
\begin{proposition}\label{2'}  Let $D:\A\rightarrow\A^*$ be a derivation.

$(i)$ If $D^{**}$ is a derivation and $\A^{**}\Box \A=\A^{**}$ then ${J_0}^*\circ D^{**}$ is a derivation.

$(ii)$ If ${J_0}^*\circ D^{**}$ is a derivation and $\A$ is Arens regular then $D^{**}$ is a derivation.
\end{proposition}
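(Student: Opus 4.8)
The plan is to verify both implications directly from the two characterizations already established: by Lemma \ref{4}(ii), ${J_0}^*\circ D^{**}$ is a derivation precisely when $\pi^{***r*}(D^{**}(\A^{**}),\A)\subseteq\A^*$, and by the recalled result from \cite{DPV}, $D^{**}$ is a derivation precisely when $\pi^{****}(D^{**}(\A^{**}),\A^{**})\subseteq\A^*$. So each part is a matter of deducing one containment from the other plus the stated side hypothesis.

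For part $(i)$, I would start from $\pi^{****}(D^{**}(\A^{**}),\A^{**})\subseteq\A^*$ and aim to show $\pi^{***r*}(D^{**}(b^{**}),a)\in\A^*$ for all $b^{**}\in\A^{**}$, $a\in\A$. The natural move is to test against $\A^{**}$: for $a^{**}\in\A^{**}$ one has $\langle\pi^{***r*}(D^{**}(b^{**}),a),a^{**}\rangle=\langle\pi^{****}(D^{**}(b^{**}),a^{**}),a\rangle$ (by the definition of the adjoints, as in the computation in the proof of Lemma \ref{4}(ii)). Since $\A^{**}\Box\A=\A^{**}$, I would write $a^{**}=c^{**}\Box a'$ with $c^{**}\in\A^{**}$, $a'\in\A$, and push the element $a'$ through, using the module-type identities for the Arens adjoints together with the hypothesis $\pi^{****}(D^{**}(b^{**}),c^{**})\in\A^*$ to conclude that the functional $a^{**}\mapsto\langle\pi^{***r*}(D^{**}(b^{**}),a),a^{**}\rangle$ is $w^*$-continuous, which forces $\pi^{***r*}(D^{**}(b^{**}),a)\in\A^*$. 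This is the step I expect to be the main obstacle: bookkeeping the exact permutation of adjoints ($\pi^{****}$, $\pi^{***r*}$, and the relevant flips) so that ``moving $a'$ across'' is legitimate, and correctly invoking whichever continuity/bilinearity property of the higher Arens extensions makes the factorization $a^{**}=c^{**}\Box a'$ do its job.

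For part $(ii)$, Arens regularity of $\A$ means $\pi^{***}=\pi^{r***r}$, i.e.\ $Z_\ell(\pi)=\A^{**}$, so $b^{**}\mapsto\pi^{***}(a^{**},b^{**})$ is $w^*$-$w^*$-continuous for every $a^{**}$; consequently $\pi^{****}$ and its flipped cousins collapse nicely. Starting from $\pi^{***r*}(D^{**}(\A^{**}),\A)\subseteq\A^*$, I would fix $a^{**},b^{**}\in\A^{**}$ and a bounded net $\{a_\alpha\}\subseteq\A$ with $a_\alpha\to a^{**}$ in $w^*$; then $\langle\pi^{****}(D^{**}(b^{**}),a^{**}),\cdot\rangle$ can be evaluated as an iterated limit over the net, and at each finite stage the quantity lies in $\A^*$ by hypothesis. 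Arens regularity lets me interchange the limits / use the $w^*$-continuity to conclude that the limit $\pi^{****}(D^{**}(b^{**}),a^{**})$ still lies in (the $w^*$-closure used correctly) $\A^*$, giving $\pi^{****}(D^{**}(\A^{**}),\A^{**})\subseteq\A^*$, hence $D^{**}$ is a derivation by the recalled criterion. The delicate point here is the same as always with Arens products: making sure the single-net-and-regularity argument genuinely yields membership in $\A^*$ rather than merely in some larger space, i.e.\ identifying the limit as a $w^*$-limit of elements of $\A^*$ that happens to converge within $\A^*$ because of the regularity-induced continuity.

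Throughout, I would lean on the adjoint-shuffling identities already used in the proof of Lemma \ref{4} (e.g.\ $\langle\pi^{****}(x^{***},a^{**}),a\rangle=\langle\pi^{***r*}(x^{***},a),a^{**}\rangle$ and $\pi^{***r*}(x^{***},a)=\pi^{**r}(x^{***},a)$ when $x^{***}\in\A^{***}$) rather than re-deriving them, so that both parts reduce to one clean continuity/factorization argument apiece.
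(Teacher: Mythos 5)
Your plan for $(i)$ factorizes the wrong object, and this is a genuine gap rather than a bookkeeping issue. To get $\pi^{***r*}(D^{**}(b^{**}),a)\in\A^*$ you propose to write each \emph{test element} $a^{**}=c^{**}\Box a'$ and conclude that $a^{**}\mapsto\langle\pi^{***r*}(D^{**}(b^{**}),a),a^{**}\rangle$ is $w^*$-continuous. But the factorization supplied by $\A^{**}\Box\A=\A^{**}$ is neither unique nor continuous in $a^{**}$; it only yields the pointwise identity $\langle\pi^{***r*}(D^{**}(b^{**}),a),c^{**}\Box a'\rangle=\langle\pi^{****}(D^{**}(b^{**}),c^{**}),a'a\rangle$, and $w^*$-continuity of a functional cannot be extracted from the existence of such decompositions of the points where it is evaluated — unpacking the identity just reproduces the original question with $a$ replaced by $a'a$. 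The ingredient you never invoke, and the one that does all the work, is the derivation identity for $D^{**}$ applied to a product: the paper factorizes the \emph{argument}, $b^{**}=a^{**}\Box a$, expands $D^{**}(a^{**}\Box a)=\pi^{****}(D^{**}(a^{**}),a)+\pi^{***r*r}(a^{**},D(a))$, and notes that each summand already lies in $\A^*$ (the first by the DPV containment with second variable $a\in\A\subseteq\A^{**}$, the second because it equals $\pi^{**}(a^{**},D(a))$), so that $\pi^{***r*}(\,\cdot\,,b)$ applied to $D^{**}(b^{**})$ collapses to $\pi^{r*}(\,\cdot\,,b)\in\A^*$.

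For $(ii)$ your sketch assembles the right ingredients but leaves open precisely the step that needs proving, and it also misapplies the hypothesis. Lemma \ref{4}$(ii)$ gives you control of $\pi^{***r*}(D^{**}(b^{**}),a)$, i.e.\ of $c^{**}\mapsto\langle D^{**}(b^{**}),c^{**}\Box a\rangle$, whereas the ``finite stage'' quantities in your iterated limit are $\pi^{****}(D^{**}(b^{**}),a_\alpha)$, i.e.\ $c^{**}\mapsto\langle D^{**}(b^{**}),a_\alpha\Box c^{**}\rangle$; multiplication is on the other side, these need not lie in $\A^*$, so ``lies in $\A^*$ by hypothesis'' is unjustified. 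Even granting it, a $w^*$-limit in $\A^{***}$ of elements of $\A^*$ need not belong to $\A^*$, and Arens regularity by itself does not close that gap. The paper avoids both problems by verifying the DPV criterion in its other form, $\pi^{r*r***}(a^{**},D^{**}(b^{**}))=\pi^{***r*r}(a^{**},D^{**}(b^{**}))$, through a direct computation that feeds in the derivation identity ${J_0}^*(\pi^{****}(D^{**}(b^{**}),c^{**}))=\pi^{****}([{J_0}^*\circ D^{**}](b^{**}),c^{**})$, uses that $[{J_0}^*\circ D^{**}](b^{**})$ is already an honest element of $\A^*$, and invokes Arens regularity only to make $a^{**}\mapsto\pi^{***}(c^{**},a^{**})$ $w^*$-continuous so the limit along $\{a_\alpha\}$ can be evaluated. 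You should redo both parts along these lines; as written, neither argument goes through.
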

\begin{proof}$(i)$. As $\A^{**}\Box \A=\A^{**},$ for each $b^{**}\in\2$ there exist $a^{**}\in\2$ and  $a\in\A$
such that $a^{**}\Box a=b^{**}.$  Then
\bea
\pi^{***r*}(D^{**}(b^{**}),b)&=&\pi^{***r*}(D^{**}(a^{**}\Box a),b)\\
&=&\pi^{***r*}(\pi^{****}(D^{**}(a^{**}),a)+\pi^{***r*r}(a^{**},D(a)),b)\\
&=&\pi^{r*}(\pi^{****}(D^{**}(a^{**}),a)+\pi^{**}(a^{**},D(a)),b)\in \A^*.
\eea
It follows from Lemma \ref{4} that ${J_0}^*\circ D^{**}$ is a derivation.

$(ii)$. Since  ${J_0}^*\circ D^{**}$ is a derivation, $${J_0}^*(\pi^{****}(D^{**}(a^{**}),b^{**}))=\pi^{****}([{J_0}^*\circ D^{**}](a^{**}),b^{**})\ \ \ \ (a^{**},b^{**}\in\A^{**}).$$
Let  $\{a_{\alpha}\}$  be a bounded net in $\A,$
$w^{*}-$converging to $a^{**}.$ Then as $\A$ is Arens regular,
\bea
\langle\pi^{r*r***}(a^{**},D^{**}(b^{**})),c^{**}\rangle&=&\lim_\alpha\langle\pi^{r*r**}(D^{**}(b^{**}),c^{**}),a_\alpha\rangle\\
&=&\lim_\alpha\langle{J_0}^*(\pi^{****}(D^{**}(b^{**}),c^{**})),a_\alpha\rangle\\
&=&\lim_\alpha\langle\pi^{****}([{J_0}^*\circ D^{**}](b^{**}),c^{**}),a_\alpha\rangle\\
&=&\lim_\alpha\langle[{J_0}^*\circ D^{**}](b^{**}),\pi^{***}(c^{**},a_\alpha)\rangle\\
&=&\langle[{J_0}^*\circ D^{**}](b^{**}),\pi^{***}(c^{**},a^{**})\rangle\\
&=&\langle\pi^{***r*r}(a^{**},D^{**}(b^{**})),c^{**}\rangle,
\eea
for all $c^{**}\in\A^{**}.$ Therefore $D^{**}$ is a derivation.
\end{proof}
As a by-product of our method of proof we provide a unified approach to new proofs for  some known results for the case where $n=1.$
\begin{theorem}\label{3} In either of the following cases, weak amenability of $\2$ implies that of $\A$.

$(i)$  $\A$ is a left ideal in $\2$; {\cite[Theorem 2.3]{GLW}}.

$(ii)$  $\A$ is a dual Banach algebra; {\cite[Theorem 2.2]{GL}}.

$(iii)$ $\A$ is a right ideal in $\2$ and $\2\Box \A=\2$; {\cite[Theorem 2.4]{EF}}.
\end{theorem}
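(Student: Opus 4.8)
The plan is to deduce all three parts of Theorem \ref{3} from Theorem \ref{2}, i.e.\ to verify in each case that every derivation $D:\A\to\A^*$ satisfies $D^{**}(\A^{**})\subseteq Z_\ell(\pi^*)$. Recall from Lemma \ref{4}(ii) that this is precisely the condition $\pi^{***r*}(D^{**}(\A^{**}),\A)\subseteq\A^*$ making ${J_0}^*\circ D^{**}$ a derivation. So in each of (i), (ii), (iii) I would produce a structural reason why $D^{**}(\A^{**})$ lands in $Z_\ell(\pi^*)$, and then invoke Theorem \ref{2} verbatim.

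For part (i), if $\A$ is a left ideal in $\2$, then $\pi^{***}(\A^{**},\A)\subseteq\A$ (as noted parenthetically in the proof of Lemma \ref{4}, this is exactly the left-ideal condition). Hence the map $a^{**}\mapsto\pi^{***}(a^{**},a)$ is $w^*$-$w^*$ continuous into $\A\subseteq\A^{**}$ for each $a\in\A$; pairing against $D^{**}(b^{**})$ then shows $\pi^{***r*}(D^{**}(b^{**}),a)$ is $w^*$-continuous on $\A^{**}$, i.e.\ lies in $\A^*$. So the hypothesis of Theorem \ref{2} holds for \emph{every} $D$, and we are done. For part (iii), where $\A$ is a right ideal in $\2$ and $\2\Box\A=\2$: here I would first observe that for a derivation $D:\A\to\A^*$ the right-ideal condition forces $D^{**}$ itself to be a derivation (this is the classical fact that $\pi^{****}(D^{**}(\A^{**}),\A^{**})\subseteq\A^*$ under suitable ideal hypotheses — see the discussion opening the ``Further consequences'' section and \cite[Theorem 7.1]{DPV}), and then apply Proposition \ref{2'}(i), whose hypotheses ``$D^{**}$ is a derivation and $\A^{**}\Box\A=\A^{**}$'' are exactly what we have; this gives that ${J_0}^*\circ D^{**}$ is a derivation, which is the condition needed to run the conclusion of Theorem \ref{2}. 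For part (ii), a dual Banach algebra: the cleanest route is to recall that a dual Banach algebra $\A=(\A_*)^*$ is a weak$^*$-closed complemented subspace of $\A^{**}$ with the predual $\A_*$ sitting inside $\A^*$ as a weak$^*$-closed submodule, and the multiplication is separately weak$^*$-continuous on $\A$; one then checks directly that for any derivation $D:\A\to\A^*$ and the projection arising from the predual, $\pi^{***r*}(D^{**}(a^{**}),a)$ is weak$^*$-continuous in $a^{**}$, again landing in $\A^*$, so Theorem \ref{2} applies.

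I would organize the write-up as: ``$(i)$'' — two lines invoking the left-ideal identity and Theorem \ref{2}; ``$(iii)$'' — cite the right-ideal fact that $D^{**}$ is a derivation, then Proposition \ref{2'}(i), then Theorem \ref{2}; ``$(ii)$'' — the predual/weak$^*$-continuity argument, then Theorem \ref{2}. In each case the final sentence is identical: by Theorem \ref{2}, weak amenability of $\2$ implies that of $\A$.

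The main obstacle I anticipate is part (ii): unlike (i) and (iii), which reduce mechanically to already-stated results, the dual-Banach-algebra case requires genuinely using the predual to get the relevant weak$^*$-continuity of $a^{**}\mapsto\pi^{***}(a^{**},a)$ \emph{into the copy of $\A^*$ we care about}. The subtlety is that separate weak$^*$-continuity of multiplication on $\A$ does not instantly hand over continuity of the third adjoint $\pi^{***}$ on $\A^{**}$; one has to go through the fact that $\A$ is weak$^*$-closed and complemented in $\A^{**}$ (so $\A^{**}=\A\oplus\A^\perp$ as $\A$-modules) and show the $\A^\perp$-component does not interfere when pairing with $D^{**}(b^{**})$, using that $D^{**}(b^{**})$ restricted appropriately annihilates the right pieces. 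This bookkeeping with the decomposition is the one spot where a careful (though still short) argument is needed rather than a one-line citation.
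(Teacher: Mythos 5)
Your global architecture --- reduce each case to showing that ${J_0}^*\circ D^{**}$ is a derivation and then finish as in Theorem \ref{2} --- is the paper's, and your part $(i)$ coincides with the paper's own two-line computation. One logical caution on the framing, though: membership of $D^{**}(b^{**})$ in $Z_\ell(\pi^*)$ means that $c^{**}\mapsto\langle D^{**}(b^{**}), a^{**}\Box c^{**}\rangle$ is $w^*$-continuous for \emph{every} $c^{**}\in\2$ (the $w^*$-topology of $\A^{***}$ is $\sigma(\A^{***},\A^{**})$), whereas the condition of Lemma \ref{4}$(ii)$, $\pi^{***r*}(D^{**}(\A^{**}),\A)\subseteq\A^*$, only tests against $c^{**}=a\in\A$. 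These are not equivalent in general (one cannot pass from $a\in\A$ to arbitrary $c^{**}$ without topological-centre information), so after verifying the weaker condition you cannot ``invoke Theorem \ref{2} verbatim''; you must instead invoke Lemma \ref{4}$(ii)$ and then repeat the last two sentences of the proof of Theorem \ref{2}. This is exactly how the paper phrases its reduction, and the fix costs nothing, but as written your plan quotes a hypothesis you have not checked.

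The substantive gaps are in $(ii)$ and $(iii)$. In $(iii)$ you assert that the right-ideal condition ``forces $D^{**}$ to be a derivation,'' citing \cite[Theorem 7.1]{DPV}; but that result only supplies the \emph{criterion} $\pi^{****}(D^{**}(\A^{**}),\A^{**})\subseteq\A^*$, it does not verify it, and the right-ideal hypothesis alone is not enough: the verification also needs the factorization $\2\Box\A=\2$ at this stage (write $b^{**}=d^{**}\Box a$, so that $\langle\pi^{****}(D^{**}(a^{**}),b^{**}),c^{**}\rangle=\langle\pi^{****}(D^{**}(a^{**}),d^{**}),a\Box c^{**}\rangle$, and only now does $a\Box c^{**}\in\A$ enter to give $\pi^{****}(D^{**}(a^{**}),b^{**})=\pi^*({J_0}^*(\pi^{****}(D^{**}(a^{**}),d^{**})),a)\in\A^*$). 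With that computation supplied, your appeal to Proposition \ref{2'}$(i)$ matches the paper. In $(ii)$ your route genuinely differs from the paper's and, as sketched, does not close. The decomposition $\A^{**}=\A\oplus(\A_*)^{\perp}$ runs into two obstacles you do not resolve: for the $\A$-component, $c^{**}\mapsto(J_{\A_*})^*(c^{**})\,a$ is only $\sigma(\A,\A_*)$-continuous, while ${J_0}^*(D^{**}(b^{**}))$ need not lie in $\A_*$, so the pairing is not visibly $w^*$-continuous; and there is no reason for $D^{**}(b^{**})$ to annihilate $(\A_*)^{\perp}\Box a$, contrary to your hoped-for ``non-interference.'' The missing idea is the paper's identity ${J_0}^*\circ D^{**}=D\circ(J_{\A_*})^*$ together with the fact that $(J_{\A_*})^*:\2\rightarrow\A$ is a homomorphism, which makes the composite a derivation by a direct check and avoids the decomposition bookkeeping entirely.
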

\begin{proof} In either cases, it sufficients  to show that for a derivation $D:\A\rightarrow\A^*$ the map ${J_0}^*\circ D^{**}: \2\rightarrow{\A}^{***}$ is also a derivation, or equivalently,  $\pi^{***r*}(D^{**}(\A^{**}),\A)\subseteq \A^*.$

$(i)$ If $\A$ is a left ideal in $\2$, i.e. $\2\Box \A\subseteq\A$,  then  for each $a^{**}, b^{**}\in A^{**}, a\in\A,$ \bea\langle \pi^{***r*}(D^{**}(a^{**}),a), b^{**}\rangle=\langle D^{**}(a^{**}), b^{**}\Box a\rangle&=&\langle\pi^{***r*}([{J_0}^* \circ D^{**}](a^{**}),a), b^{**}\rangle\\&=&\langle\pi^{**r}([{J_0}^* \circ D^{**}](a^{**}),a),b^{**}\rangle.\eea
Therefore $\pi^{***r*}(D^{**}(a^{**}),a)=\pi^{**r}([{J_0}^* \circ D^{**}](a^{**}),a)\in \A^*,$ as required.

$(ii)$ Let $\A$ be a dual Banach algebra with a predual $\A_*$. It is easy to verify  that ${J_0}^*\circ D^{**}=D\circ(J_{\A_*})^*$, where $J_{\A_*}:\A_*\rightarrow\A^{*}$ denotes the canonical embedding. Now using the fact that $(J_{\A_*})^*:\A^{**}\rightarrow\A$ is a homomorphism, a direct verification shows that $D\circ(J_{\A_*})^*$ is a derivation.

$(iii)$ To show that ${J_0}^*\circ D^{**}: \2\rightarrow{\A}^{***}$ is a derivation, by Proposition \ref{2'} we only need to show that $D^{**}$ is a derivation. However  this was done in the proof of   {\cite[Theorem 2.4]{EF}}, we also give the  next somewhat shorter proof for this. Let  $a^{**}, b^{**}, c^{**}, d^{**}\in\2$ and $a\in\A$  such that $d^{**}\Box a=b^{**}.$ As $a\Box c^{**}\in\A,$                                                                                \bea
\langle\pi^{****}(D^{**}(a^{**}),b^{**}), c^{**}\rangle&=&\langle\pi^{****}(D^{**}(a^{**}), d^{**}\Box a), c^{**}\rangle\\&=&\langle\pi^{****}(D^{**}(a^{**}),d^{**}), a\Box c^{**}\rangle\\
&=&\langle\pi^*({J_0}^*(\pi^{****}(D^{**}(a^{**}),d^{**})), a), c^{**}\rangle.
\eea
 We have thus $\pi^{****}(D^{**}(a^{**}),b^{**})=\pi^*({J_0}^*(\pi^{****}(D^{**}(a^{**}),d^{**})), a)\in\A^{*}$, and this says that $\pi^{****}(D^{**}(\A^{**}),\A^{**})\subseteq \A^*$, as required.
\end{proof}
\section*{Acknowledgments}
The useful comments of anonymous referee is gratefully acknowledged.


\begin{thebibliography}{9}

\bibitem{A} {A. Arens}, \textit{The adjoint of a bilinear operation}, Proc. Amer. Math. Soc. \textbf{2}(1951), 839--848.

\bibitem{BCD} {W. G. Bade, P. C. Curtis, \and H. G. Dales}, \textit{Amenability and weak
amenability for Beurling and Lipschitz algebras}, Proc. London Math.
Soc. \textbf{55} (1987), 359--377.


\bibitem{DGG} H. G. Dales, F. Ghahramani, \and N. Gr{\o}nb{\ae}k, \textit{Derivations into iterated duals
of Banach algebras}, Studia Math. \textbf{128} (1) (1998), 19--54.

\bibitem{DPV}
{ H. G. Dales, A. Rodrigues-Palacios \and M. V. Velasco}, \textit{The second transpose of a derivation},
J. London Math. Soc. \textbf{64} (2) (2001), 707--721.

\bibitem{EF} {M. Eshaghi Gordji \and M. Filali}, \textit{Weak amenability of the second dual of a Banach algebra}, Studia Math. \textbf{182} (3) (2007), 205--213.

\bibitem{GL} {F. Ghahramani \and J. Laali}, \textit{Amenability and topological centres of the second duals of
Banach algebras}, Bull. Austral. Math. Soc. \textbf{65} (2002),
191--197.

\bibitem{GLW} {F. Ghahramani, R.J. Loy \and G.A. Willis}, \textit{Amenability and weak amenability of the second conjugate Banach algebras}, Proc. Amer. Math. Soc. \textbf{124} (1996),
1489--1497.

\bibitem{JMV}{A. Jabbari, M.S. Moslehian \and H.R.E. Vishki}, \textit{Constructions preserving $n-$weak amenability of Banach algebras}, Mathematica Bohemica \textbf{134} (4), (2009), 349--357.
\bibitem{J} B. E. Johnson, \textit{Weak amenability of group algebras}, Bull.
London Math. Soc. \textbf{23} (1991), 281--284.

\bibitem{MV} { S. Mohammadzadeh and H.R.E. Vishki}, \textit{Arens regularity of module actions and the second adjoint of a derivation},  Bull. Austral. Math. Soc. \textbf{77} (2008), 465--476.
\end{thebibliography}
\end{document}